\documentclass[a4paper]{amsart}
\usepackage{amsmath,amsthm}
\usepackage{amsfonts}
\usepackage{latexsym}
\usepackage{amsxtra}

\numberwithin{equation}{section}

\newtheorem{theorem}{Theorem}[section]
\newtheorem{corollary}[theorem]{Corollary}
\newtheorem{lemma}[theorem]{Lemma}
\newtheorem{example}[theorem]{Example}

\newtheorem{remark}[theorem]{Remark}

\DeclareMathOperator{\idx}{\mathrm{i}}
\DeclareMathOperator{\Ker}{\mathrm{ker}}
\DeclareMathOperator{\Fr}{\mathrm{Fr}\,}
\newcommand{\cl}[1]{\overline{#1}}
\newcommand{\R}{\mathbb{R}}

\newcommand{\Z}{\mathbb{Z}}
\newcommand{\F}{\mathcal{F}}
\newcommand{\Oc}{\mathcal{O}}
\newcommand{\sign}{\mathop\mathrm{sign}\nolimits}
\newcommand{\e}{\varepsilon}
\newcommand{\s}{\mathfrak{s}}
\renewcommand{\t}[1]{\widetilde{#1}}

\usepackage[usenames]{color}

\begin{document}

\author{Alessandro Calamai}
\address[Alessandro Calamai]{Dipartimento di Scienze Matematiche, Universit\`a 
Politecnica delle Marche, Via Brecce Bianche, 60131 Ancona, Italy}
\author{Marco Spadini}
\address[Marco Spadini]{Dipartimento di Matematica Applicata, Universit\`a di 
Firenze, Via Santa Marta 3, 50139 Firenze, Italy}
\title[Forced oscillations for a class of constrained ODEs]{Branches of forced 
oscillations for a class of constrained ODEs: a topological approach}

\begin{abstract}
We apply topological methods to obtain global continuation results for
harmonic solutions of some periodically perturbed ordinary
differential equations on a $k$-dimensional differentiable manifold
$M \subseteq \R^m$. 
We assume that $M$ is globally defined as the zero set of a smooth
map and, as a first step, 
we determine a formula which reduces the computation
of the degree of a tangent vector field on $M$ to the Brouwer degree
of a suitable map in $\R^m$.
As further applications, we study the set of harmonic solutions to
periodic semi-explicit differential-algebraic equations.
\end{abstract}

\keywords{Ordinary differential equations on manifolds,
differential algebraic equations,
degree of a vector field,
periodic solution}
\subjclass[2000]{34C40; 34A09, 34C25}

\maketitle

\section{Introduction and preliminaries} 

In this paper we study $T$-periodic solutions of some parametrized families of 
$T$-periodic constrained ordinary differential equations (ODEs). More precisely, 
we study periodically perturbed autonomous ODEs on a differentiable submanifold 
of some Euclidean space, under the assumption that such a manifold is globally 
defined as the zero set of a smooth map. We consider the two different cases
of nontrivial unperturbed equation and of perturbation of the zero vector field.
We adopt a topological approach and we make use of results which are based on the 
fixed point index. However, our techniques require just the notion of the degree 
(often called characteristic or rotation) of tangent vector fields to differentiable 
manifolds, which in the `flat' case, that is when the manifold is an open subset
of an Euclidean space, is essentially the well known Brouwer degree. As an application 
of our results, we study $T$-periodic solutions of particular parametrized 
differential-algebraic equations (DAEs), for which we will prove global 
continuation results.

Recently, differential-algebraic equations have received an increasing interest 
due, in particular, to applications in engineering and have been the subject of
extensive study (see e.g.\ \cite{KM} for a comprehensive treatment) aimed mostly 
(but not only) to numerical methods. Our approach here, inspired by \cite{Ca10} 
and \cite{Spa}, is directed towards qualitative theory of some particular DAEs 
which are studied by means of topological methods, making use of the equivalence 
of the given equations and suitable ODEs on manifolds. Relatively to \cite{Ca10,Spa},
here we operate a change of perspective: assuming the viewpoint of ODEs on manifold 
allows us to present the matter in a general and extensively studied framework
(see e.g.\ \cite{FPS00}).
\smallskip

Our first goal is to obtain a formula for the computation of the degree of tangent 
fields to a $k$-dimensional differentiable submanifold $M$ of $\R^m$,
in the particular case when the manifold is defined 
implicitly as the zero set of a smooth function $g:U\to\R^s$, with
$s=m-k$ and $U\subseteq\R^m$ 
open and connected, and assuming that with an appropriate choice of (orthonormal) 
coordinates one can decompose $\R^m$ as $\R^k\times\R^s$ in such a way that the 
Jacobian matrix of $g$ with respect to the last $k$ variables, $\partial_2g(x,y)$, 
is nonsingular for all $(x,y)\in U$.
Notice that, in this case, $0\in\R^s$ is a regular value of $g$ so
that $M=g^{-1}(0)$ is a smooth submanifold  of $\R^m=\R^k\times\R^s$
of codimension $s$.

The formula we find (see Theorem \ref{formuladeg} below) reduces the computation of the 
degree of a tangent vector field on $M$ to that of an appropriate map in $\R^m$.  More 
precisely, let $\varphi:M\to\R^m$ be tangent to $M$, in the sense that $\varphi(\xi)$ 
belongs to the tangent space $T_{\xi}M$ of $M$ at $\xi$ for any $\xi\in M$. Let also 
$\t \varphi$ be \emph{any} extension of $\varphi$ to $U$. With a small abuse of notation, 
we will write, according to the above decomposition,
\[
 \t\varphi(\xi)=\t\varphi(x,y)=\big(\t\varphi_1(x,y),\t\varphi_2(x,y)\big),
\]
and define $\F:U\to\R^m$ as $\F(x,y)=\big(\t\varphi_1(x,y),g(x,y)\big)$, for
any $(x,y)\in U$. We will prove that 
\begin{equation}\label{ffond}
 \deg(\varphi,M)=\s \deg(\F,U)
\end{equation}
where $\s$ is the (constant) sign of $\det(\partial_2g)$ on the connected set $U$.
Observe that the just defined vector field $\F$ on $U$ may well not be tangent to
$M$. In fact, on $M$, the second component of $\F$ is forced to be zero regardless of
the shape of $M$.
 
The above formula \eqref{ffond} is equivalent to 
a result proved in \cite{Spa} but we provide here a simplified proof. Notice that \eqref{ffond} 
does not depend on the chosen extension of $\varphi$.
Notice also that, since in Euclidean spaces vector fields can be regarded as maps and 
vice versa,  the degree of the vector field $\F$ that appears in the second member of
\eqref{ffond} is essentially the well known Brouwer degree, with respect to $0$, of 
$\F$ seen as a map. Hence the degree of $\F$, having a simpler nature than that of $\varphi$, 
is `morally' easier to compute.

\medskip

As an application we study the set of harmonic solutions of the
following pa\-ram\-etrized differential 
equations on a manifold $M\subseteq\R^m$, with $M=g^{-1}(0)$ and $g$ as above:
\begin{subequations}\label{unoedue}
\begin{equation}
\dot \xi=f(\xi)+\lambda h(t,\xi),\quad\lambda\geq 0  \label{uno}
\end{equation}
and
\begin{equation}\label{due}
\dot \xi=\lambda h(t,\xi),\quad\lambda\geq 0
\end{equation}
\end{subequations}
where $h:\R\times M\to\R^m$ and $f:M\to\R^m$ are continuous maps with the property that 
$f(\xi)$ and $h(t,\xi)$ belong to $T_\xi M$ for any $(t,\xi)\in\R\times M$, and $h$ is 
$T$-periodic in the first variable.

Notice that locally $M$ can be represented as graph of some map from an open subset 
of $\R^k$ to $\R^s$, with $k=m-s$. Thus equations \eqref{unoedue} can be locally simplified. In 
view of this fact one might think that it is possible to reduce equations \eqref{unoedue}
to ordinary differential equations in $\R^k$. It is not so. In fact, globally, 
$M$ may not be the graph of a map from an open subset of $\R^k$ to $\R^s$ as, for 
instance, when $U=\R^3$ and $g:\R\times\R^2\to\R^2$ is given by
\[
g(x,y)=g(x;y_1,y_2)=\big(e^{y_1}\cos y_2-x, e^{y_1}\sin y_2-x\big).
\]
In this case, although $\det\partial_2g(x,y)\neq 0$, one clearly has that the 
$1$-dimensional manifold $M=g^{-1}(0)$ is not the graph of a function 
$x\mapsto\big(y_1(x),y_2(x)\big)$. In fact, $M$ consists of infinitely many 
connected components each lying in a plane $y_2=\frac{\pi}{4}+\ell\pi$ for $\ell\in\Z$.
(See also Examples \ref{elicafinta} and \ref{elicavera} below.)

Observe also that even when $M$ is a (global) graph of some map $\Gamma$, the expression 
of $\Gamma$ might be too complicated or impossible to determine analytically, so that the 
decoupled versions of equations \eqref{unoedue} may be too difficult to use. A simple 
example of this fact is obtained by taking $k=s=1$, $U=\R\times\R$ and $g(x,y)=y^7+y-x^2+x^5$.
\medskip

As further applications, we will deduce the results of \cite{Ca10,Spa} about harmonic 
solutions of periodic semi-explicit differential-algebraic equations that have either 
the form 
\begin{subequations}\label{DAEs}
\begin{equation}\label{tipo1}
 \left\{
\begin{array}{l}
 \dot x=\gamma(x,y)+\lambda \sigma(t,x,y),\quad\lambda\geq 0,\\
 g(x,y)=0,
\end{array}
\right.
\end{equation}
or 
\begin{equation}\label{tipo2}
 \left\{
\begin{array}{l}
 \dot x=\lambda\sigma(t,x,y),\quad\lambda\geq 0,\\
 g(x,y)=0.
\end{array}
\right.
\end{equation} 
\end{subequations}
Here $U\subseteq\R^k\times\R^s$ is a connected open set, $g:U\to\R^s$
is as above, 
$\gamma:U\to\R^k$ and $\sigma:\R\times U\to\R^k$ are continuous
maps, and $\sigma$ is $T$-periodic in $t$ for a given $T>0$. In fact,
as we shall see, equations \eqref{tipo1} 
and \eqref{tipo2} are equivalent to \eqref{uno} and \eqref{due}, respectively, for 
appropriate vector fields $f$ and $h$ on the manifold $g^{-1}(0)$.
Notice that, as remarked above, although the set $g^{-1}(0)$ is locally
the graph of some map of an open set of $\R^k$ to $\R^s$ so that equations
\eqref{tipo1} and \eqref{tipo2} can be locally decoupled, it is not always possible
or convenient to do globally so.

\section{Tangent vector fields and the notion of degree}

We now recall some basic notions about tangent vector fields on manifolds, and introduce the notion 
of degree of an admissible tangent vector field.

Let $M\subseteq\R^m$ be a manifold. Let $w$ be a tangent vector field on $M$, that 
is, a continuous map $w:M\to\R^m$ with the property that $w(\zeta)\in T_{\zeta}M$ for any 
$\zeta\in M$. 
If $w$ is (Fr\'echet) differentiable at $\zeta\in M$ and $w(\zeta)=0$, then the differential 
$dw_{\zeta} : T_{\zeta}M\to\R^m$ maps $T_{\zeta}M$ into itself (see e.g.\ \cite{Mi}), so that 
the determinant $\det dw_{\zeta}$ of $dw_{\zeta}$ is defined. If, in addition, $\zeta$ is a 
nondegenerate zero (i.e.\ $dw_{\zeta} : T_{\zeta}M\to\R^m$ is injective) then $\zeta$ is an 
isolated zero and $\det dw_{\zeta}\neq 0$. 

Let $W$ be an open subset of $M$ in which we assume $w$ \emph{admissible} (for the degree); that 
is, the set $w^{-1}(0) \cap W$ is compact. Then, one can associate to the pair $(w,W)$ an integer, 
$\deg(w,W)$, called the \emph{degree (or characteristic) of the vector field $w$ in $W$}, which, 
in a sense, counts (algebraically) the zeros of $w$ in $W$ (see e.g.\ \cite{FPS05, H, Mi} and 
references therein). In fact, when the zeros of $w$ are all nondegenerate, then the set 
$w^{-1}(0)\cap W$ is finite and
\begin{equation}\label{sommasegni}
\deg(w,W)=\sum_{\zeta\in w^{-1}(0)\cap W}{\rm sign} \det dw_{\zeta}.
\end{equation}
Observe that in the flat case, i.e.\ when $M = \R^m$, $\deg(w,W)$ is just the classical
Brouwer degree with respect to zero where $V$ is any
bounded open neighborhood of $w^{-1}(0) \cap W$ whose closure is
contained in $W$.

The notion of degree of an admissible tangent vector field plays a crucial role throughout
this paper. It enjoys a number of properties some of which we report here for the sake of future 
reference.

\begin{description}
\item[Additivity]
{\em Let $w$ be admissible in $W$. If $W_1$ and $W_2$ are two disjoint open subsets of $W$ whose 
union contains $w^{-1}(0)\cap W$, then}
\[
\deg(w,W) = \deg(w,W_1)+\deg(w,W_2).
\]
\item[Homotopy Invariance]
{\em Let $h:M\times[0,1]\to\R^m$ be an admissible homotopy (of tangent vector fields) in $W$; that 
is, $h(\zeta,\lambda)\in T_{\zeta}M$ for all $(\zeta,\lambda)\in M\times[0,1]$ and 
$h^{-1}(0)\cap W\times[0,1]$ is compact. Then $\deg\big(h(\cdot,\lambda),W\big)$ is independent of 
$\lambda$.}
\item[Solution]
{\em If $w$ is admissible in $W$ and $\deg(w,W)\neq 0$, then $w$ has a zero in $W$.}
\end{description}
The Additivity Property implies the following important property:
\begin{description}
 \item[Excision]{\em Let $(w,W)$ be admissible. If $V\subseteq W$ is open and contains 
$w^{-1}(0)\cap W$, then $\deg(w,W) = \deg(w,V)$.} 
\end{description}
Another property that plays an important role in this paper is the following one which allows
the comparison between the degrees of vector fields that correspond under diffeomorphisms. Recall 
that if $v:N\to\R^n$ and $w:M\to\R^m$ are tangent vector fields on the differentiable manifolds
$N\subseteq\R^n$ and $M\subseteq\R^m$, and if $\rho:W\to V$ is a diffeomorphism from an open subset 
$W$ of $M$ onto an open subset $V$ of $N$, we say that $v|_V$ and $w|_W$ \emph{correspond under 
$\rho$} when $v(\rho(\zeta))=d\rho_{\zeta}\big(w(\zeta)\big)$ for all $\zeta\in W$.

\smallskip
\begin{description}
\item[Invariance under diffeomorphisms] {\em Let $M\subseteq\R^n$ and $N\subseteq\R^m$ 
be differentiable manifolds and let $v:N\to\R^n$ and $w:M\to\R^m$ be tangent vector fields.
Assume that $v|_V$ and $w|_W$ correspond under some diffeomorphism. Then, if either $v$ is 
admissible in $V$ or or $w$ is admissible in $W$, so is the other and 
\[
\deg(v,V)=\deg(w,W).
\]}
\end{description}

\begin{remark}\label{sper}
Let $W\subseteq M$ be open and 
relatively compact. If $w:M\to\R^m$ is such that $w(\zeta)\neq 0$ on the boundary $\Fr(W)$ 
of $W$, then $(w,W)$ is admissible. Let $\e=\min_{\zeta\in\Fr(W)}|w(\zeta)|$. Then, for any 
$v:M\to\R^m$ such that $\max_{\zeta\in\Fr(W)}|w(\zeta)-v(\zeta)|<\e$, we have that $(v,W)$ is 
admissible and that the homotopy $h:M\times[0,1]\to\R^m$ given by 
\[
h(\zeta,\lambda)=\lambda w(\zeta)+(1-\lambda) v(\zeta)
\]
is admissible in $W$. Hence, by the Homotopy Invariance Property, 
\[
\deg(w,W)=\deg(v,W).
\]
\end{remark}

The Excision Property allows the introduction of the notion of index of an isolated zero 
of a tangent vector field. Let $\zeta\in M$ be an isolated zero of $w$. Clearly, $\deg(w,V)$ 
is well defined for each open $V\subseteq M$ such that $V\cap w^{-1}(0)=\{ \zeta\}$. By the 
Excision Property $\deg(w,V)$ is constant with respect to such $V$'s. This common value of 
$\deg(w,V)$ is, by definition, the \emph{index of $w$ at $ \zeta$}, and is denoted by 
$\mathrm{i}\,(w, \zeta)$. Using this notation, if $(w,W)$ is admissible, by the Additivity 
Property we get that if all the zeros in $W$ of $w$ are isolated, then
\begin{equation}\label{sommaindici}
\deg(w,W)=\sum_{\zeta\in w^{-1}(0)\cap W} \mathrm{i}\,(w,\zeta).
\end{equation}
By formula \eqref{sommasegni} we have that if $\zeta$ is a nondegenerate zero of $w$, then
\begin{equation}\label{idx_loc}
 \mathrm{i}\,(w,\zeta)=\sign\det d w_{\zeta}.
\end{equation}
Notice that \eqref{sommasegni} and \eqref{sommaindici} differ in the fact that, in the
latter, the zeros of $w$ are not necessarily nondegenerate as they have to be in the former. 
In fact, in \eqref{sommaindici}, $w$ need not be differentiable at its zeros.

\section{Tangent vector fields on implicitly defined manifolds}\label{sectan}

Let $\Psi:\R\times M\to\R^m$ be a (time-dependent) tangent vector field on $M\subseteq\R^m$, 
that is a continuous map with the property that $\Psi(t,\zeta)\in T_{\zeta}M$ for each 
$(t,\zeta)\in\R\times M$. 
Assume that there is
a connected open subset $U$ of $\R^m$ and a smooth map $g:U\to\R^s$ with the 
property that $M=g^{-1}(0)$.
Suppose that with an orthogonal transformation, if necessary, one can write
$\R^m=\R^k\times\R^s$, in such a way that the partial derivative of $g$ 
with respect to the second variable, $\partial_2 g(x,y)$, is invertible for 
each $(x,y)\in U$.
By this we mean that there exists an orthogonal transformation $P$ of
$\R^m$ such that the above property holds with the map
$\tilde g= g\circ P : P^{-1}(U) \to \R^s$ in place of $g$, and with
the open set $P^{-1}(U)$ replacing $U$. 

To illustrate this point consider by way of example $m=3$, $s=1$, $U=\R^3$,
and $g(\xi_1, \xi_2, \xi_3)=\xi_2+\xi_1\xi_3$
(here $g^{-1}(0)$ is a hyperbolic paraboloid).
Setting $(x_1,x_2,y)=(\xi_1, \xi_3, \xi_2)$ and 
$\tilde g(x_1,x_2,y)=y+x_1x_2$, one has that the partial derivative of
$\tilde g$ with respect to $y$ is $1$.

As previously remarked in the Introduction, the restrictive assumption
we impose on $g$ does not imply that $M=g^{-1}(0)$ is globally a
graph. Likewise, one should note that in general a manifold may not be
representable as the zero set of a function for which the above
condition holds, as in the case of the sphere in $\R^m$.

According to the above decomposition of $\R^m$ we can write, for any
$\xi\in M$, $\xi=(x,y)$ and, for any $t\in\R$ 
\[
 \Psi(t,\xi)=\Psi(t,x,y)=\big(\Psi_1(t,x,y),
                              \Psi_2(t,x,y)\big).
\]
Notice that one must have 
\begin{equation}\label{phi2}
\Psi_2(t,x,y)=-\big(\partial_2g(x,y)\big)^{-1}\partial_1g(x,y)\Psi_1(t,x,y).
\end{equation}
In fact, $\Psi(t,\xi)\in T_{\xi}M$ being equivalent to $\Psi(t,\xi)\in\Ker g'(x,y)$,
one has for each $(t,x,y)\in\R\times M$ that
\[
0= g'(x,y)\Psi(t,x,y)=
   \partial_1g(x,y)\Psi_1(t,x,y)+\partial_2g(x,y)\Psi_2(t,x,y),
\]
which implies \eqref{phi2}; here $g'(x,y)$ denotes the Fr\'echet derivative of $g$ at 
$(x,y)$.

\medskip

We now focus on differential equations on $M$ and write them
equivalently (in a sense specified below) as differential-algebraic
equations.

Let us consider the following differential equation on $M$:
\begin{equation}\label{eq.phi.onM}
 \dot\xi= \Psi(t,\xi).
\end{equation}
By a \emph{solution} of \eqref{eq.phi.onM} we mean a $C^1$ curve
$\xi:J \to \R^m$, defined on a (nontrivial) interval $J\subseteq \R$,
which satisfies the conditions $\xi(t) \in M$ and $\dot\xi(t) =
\Psi(t,\xi(t))$, identically on $J$.
                              ̇
We need the following fact.

\begin{remark}\label{remext0}
If $\Psi$ is as above, since $\R\times M$ is a closed subset of the metric space $\R\times U$, the well known Tietze's
Theorem (see e.g.\ \cite{Dug}) implies that there exists an extension
$\t\Psi:\R\times U\to\R^m$ 
of $\Psi$. 
\end{remark}

Consider also the `extended' equation on the neighborhood $U$ of $M$
in $\R^m$:
\begin{equation}\label{eq.phitilde.onM}
 \dot\eta= \t\Psi(t,\eta),
\end{equation}
where $\t\Psi$ is any extension of $\Psi$ as in Remark \ref{remext0}.
Observe that the solutions of \eqref{eq.phi.onM} are also solutions of
\eqref{eq.phitilde.onM};
conversely, the solutions of \eqref{eq.phitilde.onM} that meet $M$ do
actually lie on $M$ and thus are solutions of \eqref{eq.phi.onM}.

Equation \eqref{eq.phitilde.onM} can be conveniently written, setting
$\eta=(x,y)$, as the following system:
\begin{equation}\label{eq.phi12.onM}
\left\{
\begin{array}{l}
 \dot x= \Psi_1(t,x,y),\\
 \dot y= \Psi_2(t,x,y)
\end{array}\right.
\end{equation}
where, for the sake of simplicity, $\t\Psi$ has been replaced by $\Psi$.

We claim that \eqref{eq.phi.onM} is equivalent to the following
differential-algebraic equation:
\begin{equation}\label{eq.phi.DAE}
\left\{
\begin{array}{l}
 \dot x= \Psi_1(t,x,y),\\
 g(x,y)=0.
\end{array}\right.
\end{equation}
Here by a \emph{solution} of \eqref{eq.phi.DAE} we mean a pair of
$C^1$ functions $x:J \to \R^k$ and $y:J \to \R^s$, $J$ an interval,
with the property that $\dot x(t)=\Psi_1(t,x(t),y(t))$ and
$g\big(x(t),y(t)\big)=0$ for all $t\in J$.

To prove the claim, let $x:J\to\R^k$ and $y:J\to\R^s$ be $C^1$ maps
defined on an interval $J$ 
with the property that $t\mapsto\xi(t)=\big(x(t),y(t)\big)$ is a
solution of \eqref{eq.phi.onM}. Then, for all $t\in J$, $\dot x(t)=\Psi_1(t,x(t),y(t))$
and, since $\big(x(t),y(t)\big)\in M$, we have $g\big(x(t),y(t)\big)=0$. 
Conversely, if $t\mapsto\big(x(t),y(t)\big)$ is a solution of 
\eqref{eq.phi.DAE} then, differentiating $g\big(x(t),y(t)\big)=0$ at any $t\in J$, one gets
\[
 \partial_1g\big(x(t),y(t)\big)\dot x(t)+\partial_2g\big(x(t),y(t)\big)\dot y(t)=0.
\]
So that 
\begin{multline*}
\dot y(t)=-(\partial_2g\big(x(t),y(t)\big))^{-1}\partial_1g\big(x(t),y(t)\big)\dot x(t)\\
         =-(\partial_2g\big(x(t),y(t)\big))^{-1}\partial_1g\big(x(t),y(t)\big)
                                                           \Psi_1\big(t,x(t),y(t)\big).
\end{multline*}
Taking into account \eqref{phi2} and the fact that the solution meets
$M$, we have the claim.

\section{Computation of the degree}

As in the previous section, let $M \subseteq \R^m$ be a differentiable manifold that is globally 
defined as a zero set of a suitable map $g:U\to\R^s$, $U\subseteq\R^m$. Here we give a formula 
for the degree of tangents vector fields on $M$ in terms of (potentially easier to compute) degree 
of appropriate vector fields on $U$. The main result of this section is Theorem \ref{formuladeg} 
below, which is equivalent to a result of \cite{Spa}. Here we provide a simplified proof.

Throughout this section $\varphi: M \to \R^m$ will be a
continuous tangent vector field on $M$.
As in Remark \ref{remext0}, Tietze's
Theorem implies that there exists an extension $\t\varphi:U\to\R^m$ 
of $\varphi$. 
Thus, it is not restrictive to assume, as we sometimes do, that the given 
tangent vector fields are actually defined on a convenient neighborhood of the manifold $M$. In
fact, although an arbitrary extension of $\varphi$ may have many zeros outside $M$, we are 
interested in the degree of $\varphi$ on $M$ which only takes into account those zeros of 
$\varphi$ that lie on $M$.

\begin{theorem}\label{formuladeg}
Let $U\subseteq\R^k\times\R^s$ be open and connected, let $g:U\to\R^s$ be a smooth function such 
that $\partial_2g(x,y)$ is nonsingular for any $(x,y)\in U$ and let $M=g^{-1}(0)$. Assume that 
$\varphi:M\to\R^k\times\R^s$ is a continuous tangent vector field on $M$, and let $\t\varphi_1$ be the projection on $\R^k$ 
of an arbitrary continuous extension $\t\varphi$ of $\varphi$ to $U$. Define $\F:U\to\R^k\times\R^s$ 
by $\F(x,y)=\big(\t\varphi_1(x,y),g(x,y)\big)$. Then, $\F$ is admissible in $U$ if and only if
so is $\varphi$ in $M$, and
\begin{equation}\label{idgradi}
 \deg(\varphi,M)=\s\deg(\F,U),
\end{equation}
where $\s$ is the constant sign of $\det\partial_2g(x,y)$ for all $(x,y)\in U$.
\end{theorem}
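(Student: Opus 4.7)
My strategy proceeds in three steps: establish the equivalence of admissibility, reduce to the case of nondegenerate zeros, and compute the index contribution of each such zero via a Schur-complement identity. First I verify $\F^{-1}(0)\cap U=\varphi^{-1}(0)\cap M$: if $\F(x,y)=0$ then $g(x,y)=0$, so $(x,y)\in M$, and $\t\varphi_1(x,y)=\varphi_1(x,y)=0$; by \eqref{phi2} this forces $\varphi_2(x,y)=0$ and hence $\varphi(x,y)=0$. The reverse inclusion is immediate since on $M$ one has $\t\varphi=\varphi$. Thus admissibility of one pair is equivalent to that of the other.

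Next, by standard density results (smooth approximation of continuous tangent vector fields combined with Sard's theorem), I replace $\varphi$ by a smooth tangent field $\varphi^\star$ with only nondegenerate zeros and arbitrarily close to $\varphi$ on a compact neighborhood of $\varphi^{-1}(0)\cap M$. Choosing an extension $\t\varphi^\star$ close to $\t\varphi$ on a compact set containing $\F^{-1}(0)\cap U$ produces a map $\F^\star(x,y)=\bigl(\t\varphi^\star_1(x,y),g(x,y)\bigr)$ close to $\F$ there. By Remark \ref{sper}, $\deg(\varphi,M)=\deg(\varphi^\star,M)$ and $\deg(\F,U)=\deg(\F^\star,U)$, so it suffices to prove \eqref{idgradi} with $\varphi^\star$, $\F^\star$ in place of $\varphi$, $\F$.

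Finally, fix a zero $\zeta_0=(x_0,y_0)\in M$ of $\varphi^\star$. The implicit function theorem provides neighborhoods $V_0\subseteq\R^k$ of $x_0$ and $V\subseteq M$ of $\zeta_0$, together with a smooth $Y:V_0\to\R^s$ such that $\rho(x):=(x,Y(x))$ is a diffeomorphism $V_0\to V$ satisfying $Y'(x_0)=-\bigl(\partial_2 g(\zeta_0)\bigr)^{-1}\partial_1 g(\zeta_0)$. The vector field on $V_0$ corresponding to $\varphi^\star|_V$ under $\rho$ is $w(x)=\t\varphi^\star_1(x,Y(x))$: matching the first components in the defining relation $\varphi^\star(\rho(x))=d\rho_x(w(x))$ forces this expression, and matching of the second components is then automatic from \eqref{phi2}. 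By the Invariance under Diffeomorphisms Property and \eqref{idx_loc},
\[
\idx(\varphi^\star,\zeta_0)=\sign\det dw_{x_0}=\sign\det\bigl(\partial_1\t\varphi^\star_1-\partial_2\t\varphi^\star_1(\partial_2 g)^{-1}\partial_1 g\bigr)\big|_{\zeta_0}.
\]
On the other hand, applying the Schur-complement identity to the block matrix
\[
d\F^\star_{\zeta_0}=\begin{pmatrix}\partial_1\t\varphi^\star_1 & \partial_2\t\varphi^\star_1\\ \partial_1 g & \partial_2 g\end{pmatrix}\bigg|_{\zeta_0}
\]
with invertible pivot $\partial_2 g(\zeta_0)$ gives $\det d\F^\star_{\zeta_0}=\det\partial_2 g(\zeta_0)\cdot\det\bigl(\partial_1\t\varphi^\star_1-\partial_2\t\varphi^\star_1(\partial_2 g)^{-1}\partial_1 g\bigr)|_{\zeta_0}$, whence $\idx(\F^\star,\zeta_0)=\s\,\idx(\varphi^\star,\zeta_0)$. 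Summation over the finite zero set via \eqref{sommasegni}, together with $\s^2=1$, then yields \eqref{idgradi}.

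The main technical obstacle is the simultaneous smoothing of $\varphi$ on $M$ and of $\t\varphi$ on $U$ while preserving both degrees, but this is routine once one restricts attention to compact neighborhoods of the coinciding zero sets. The remaining ingredients—the local graph representation of $M$ via the implicit function theorem and the block determinant identity—are elementary.
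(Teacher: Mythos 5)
Your proposal is correct and its core — the zero-set identity via \eqref{phi2}, the local graph $\rho(x)=(x,Y(x))$ from the implicit function theorem, the Invariance under Diffeomorphisms Property to identify $\idx(\varphi,\zeta_0)$ with $\sign\det\bigl(\partial_1\t\varphi_1-\partial_2\t\varphi_1(\partial_2 g)^{-1}\partial_1 g\bigr)$, and the Schur-complement factorization of $\det d\F_{\zeta_0}$ — is exactly the content of the paper's Lemma \ref{zerisolati}. The one genuine difference is where you put the regularization. You smooth the \emph{tangent field} $\varphi$ on $M$ to a $\varphi^\star$ with nondegenerate zeros and then worry about extending it; this needs a transversality theorem for sections of $TM$, which is standard but not elementary, and it is the step you yourself flag as the ``main technical obstacle.'' The paper instead applies Sard's lemma to the \emph{Euclidean} map $\F$ to get a nearby $C^1$ map $\F^\e$ with $(0,0)$ as a regular value, and then manufactures the tangent field $\psi^\e$ from the first component $\F^\e_1$ via the tangency relation \eqref{phi2}; tangency and nondegeneracy of the zeros of the associated map are then automatic, so no smoothing on the manifold is ever needed. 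Your route buys a slightly more symmetric statement (you perturb the object whose degree you ultimately want), the paper's buys elementarity. One small imprecision to fix: you cannot literally write $\deg(\F,U)=\deg(\F^\star,U)$ from Remark \ref{sper}, since $U$ need not be relatively compact and $\F^\star$ may acquire zeros far away; first excise to a bounded open $V\supseteq\F^{-1}(0)$ with $\cl{V}\subseteq U$, control the perturbation on $\Fr(V)$ and $\Fr(V\cap M)$, and compare $\deg(\F^\star,V)$ with $\deg(\psi^\star,V\cap M)$ there, as the paper does. With that adjustment your argument goes through.
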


Before we provide the proof of Theorem \ref{formuladeg}, we consider a special case.
Observe that a point $(p,q)\in M$ is a zero of $\varphi$ if and only if it is a zero of $\F$.

\begin{lemma}\label{zerisolati}
Let $U$, $\s$, $\varphi$ be as in Theorem \ref{formuladeg}. Assume that $\varphi$ is $C^1$ and 
let $\t\varphi=\big(\t\varphi_1,\t\varphi_2\big)$ be a $C^1$ extension of $\varphi$ to $U$. Let 
$\F:U\to\R^k\times\R^s$ be given by $\F(x,y)=\big(\t\varphi_1(x,y),g(x,y)\big)$, as in Theorem 
\ref{formuladeg}, and suppose that all the zeros of $\F$ are nondegenerate. Then,
\[
 \deg(\varphi,M)=\mathfrak{s}\deg(\F,U).
\]
\end{lemma}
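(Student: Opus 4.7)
The plan is to reduce both $\deg(\F,U)$ and $\deg(\varphi,M)$ to sums of signs over a common finite zero set and then match signs pointwise up to the global factor $\s$. As a first step, I would identify the zero sets: $(p,q)\in U$ is a zero of $\F$ precisely when $g(p,q)=0$ and $\t\varphi_1(p,q)=0$. The first condition places $(p,q)$ on $M$, where $\t\varphi_1(p,q)=\varphi_1(p,q)$; then the tangency relation \eqref{phi2} forces $\varphi_2(p,q)=0$ as well. Hence $\F^{-1}(0)\cap U=\varphi^{-1}(0)\cap M$. Since the zeros of $\F$ are nondegenerate, hence isolated and locally finite, this common zero set is finite on compacta, $\varphi$ is automatically admissible on $M$, and both degrees reduce via \eqref{sommasegni} and \eqref{sommaindici} to finite sums.

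Next, at a common zero $(p,q)$, I would write the Jacobian of $\F$ in block form
\[
\F'(p,q)=\begin{pmatrix}\partial_1\t\varphi_1(p,q) & \partial_2\t\varphi_1(p,q)\\ \partial_1 g(p,q) & \partial_2 g(p,q)\end{pmatrix}.
\]
Invertibility of $\partial_2 g(p,q)$ and the Schur complement identity then yield
\[
\det\F'(p,q)=\det\partial_2 g(p,q)\cdot\det\bigl(\partial_1\t\varphi_1-\partial_2\t\varphi_1(\partial_2 g)^{-1}\partial_1 g\bigr)(p,q).
\]
By the implicit function theorem, $M$ is locally the graph of a $C^1$ map $y=\psi(x)$ near $(p,q)$ with $\psi'(p)=-(\partial_2 g(p,q))^{-1}\partial_1 g(p,q)$. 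Under the chart $x\mapsto(x,\psi(x))$, the tangent field $\varphi$ corresponds, by \eqref{phi2}, to the vector field $w(x)=\t\varphi_1(x,\psi(x))$ on an open subset of $\R^k$; differentiating at $p$ gives
\[
w'(p)=\partial_1\t\varphi_1(p,q)-\partial_2\t\varphi_1(p,q)(\partial_2 g(p,q))^{-1}\partial_1 g(p,q),
\]
so the second factor in the Schur identity is exactly $\det w'(p)$ and $\sign\det\F'(p,q)=\s\cdot\sign\det w'(p)$.

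To conclude, Invariance under diffeomorphisms together with \eqref{idx_loc} gives $\idx(\varphi,(p,q))=\idx(w,p)=\sign\det w'(p)$, so $\sign\det\F'(p,q)=\s\cdot\idx(\varphi,(p,q))$ at each common zero. Summing over $\F^{-1}(0)\cap U$ then yields $\deg(\F,U)=\s\deg(\varphi,M)$, whence $\s\deg(\F,U)=\deg(\varphi,M)$ since $\s^2=1$. The main obstacle is conceptual rather than computational: one must recognize the Schur complement of the block Jacobian of $\F$ as the differential of the local representative of $\varphi$ in an implicit-graph chart. Once that identification is in place the signs align and the diffeomorphism invariance of the degree closes the argument.
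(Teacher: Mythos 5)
Your argument coincides with the paper's own proof: the same identification of $\F^{-1}(0)$ with $\varphi^{-1}(0)\cap M$, the same Schur-complement factorization of the block Jacobian of $\F$, the same implicit-function-theorem graph chart (with the projection as its inverse) and diffeomorphism invariance to identify $\idx\big(\varphi,(p,q)\big)$ with $\sign\det w'(p)$, and the same final summation over the common zero set. The only loose point --- shared with the paper, which likewise lists ``the zeros of $\F$'' as a finite collection --- is that admissibility (compactness, hence finiteness, of the zero set) is tacitly assumed rather than being a consequence of nondegeneracy alone, so your phrase ``automatically admissible'' should be read as an implicit hypothesis rather than a deduction.
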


\begin{proof}
Observe that since the zeros of $\F$ are nondegenerate, they are also isolated. This implies 
that the zeros of $\varphi$ are isolated as well. Let $(p,q)$ be a zero of $\F$. As a first 
step, we will show that
\begin{equation}\label{flocale}
 \idx\big(\varphi,(p,q)\big)=\mathfrak{s}\sign\det d\F_{(p,q)}.
\end{equation} 

 Since $\det\partial_2 g(p,q)\neq 0$, the so-called generalized Gauss algorithm (see e.g.\ 
\cite{G}) yields 
\begin{equation}
\begin{split}\label{fsgnsch}
 \det & d \F_{(p,q)} = \det
\begin{pmatrix}
 \partial_1 \t\varphi_1(p,q) & \partial_2 \t\varphi_1(p,q)\\
\partial_1 g(p,q) & \partial_2 g(p,q)
\end{pmatrix}=\\
&\quad = \det\partial_2 g(p,q)
     \cdot\det \Big(\partial_1\t\varphi_1(p,q)-\partial_2\t\varphi_1(p,q)
                     \big(\partial_2 g(p,q)\big)^{-1}\partial_1 g(p,q)\Big).
\end{split}
\end{equation}
As remarked above,
since $(p,q)$ is a nondegenerate zero of $\F$ then it is also isolated, as a zero, of both
$\F$ and of $\varphi$ on $M$. Let $B=W\times V$, with $W\subseteq\R^k$ and $V\subseteq\R^s$
open, be an isolating neighborhood of $(p,q)$ in $\R^k\times\R^s$ i.e.\ $B$ is such that 
$\F (x,y)\neq(0,0)$ for any $(x,y)\in B\setminus\{(p,q)\}$, and $\varphi(x,y)\neq (0,0)$ 
for any $(x,y)\in B\cap M\setminus\{(p,q)\}$.

Since $\partial_2g(p,q)$ is invertible, the implicit function theorem implies that, taking
a smaller $W$ if necessary, we can assume that there exists a $C^1$ function 
$\gamma:W\to\R^s$ such that $g\big(x,\gamma(x)\big)=0$ for any $x\in W$ and 
$\gamma(W)\subseteq V$. Thus the map $G:x\mapsto\big(x,\gamma(x)\big)$ is a 
diffeomorphism of $W$ onto $B\cap M$ whose inverse is the projection $\pi:B\cap M\to W$ 
given by $\pi(x,y)=x$.

The property of invariance under diffeomorphisms of the degree of tangent vector fields 
implies that
\[
 \deg(\varphi,B\cap M)=\deg\big(\pi\circ\varphi\circ G,W).
\]
Notice that $p$ is an isolated zero of $\pi\circ\varphi\circ G$. Thus, the above relation
becomes
\begin{equation}\label{ididx}
 \idx\big(\varphi,(p,q)\big)=\idx\big(\pi\circ\varphi\circ G,p)
\end{equation}
The differential of $\pi\circ\varphi\circ G$ at $p$ is given by
\[
 \partial_1 \varphi_1(p,q)-\partial_2 \varphi_1(p,q)
\big(\partial_2 g(p,q)\big)^{-1}\partial_1 g(p,q)
\]
(recall that $q=\gamma(p)$), which is equal to
\begin{equation}\label{diftil}
 \partial_1\t\varphi_1(p,q)-\partial_2\t\varphi_1(p,q)
\big(\partial_2 g(p,q)\big)^{-1}\partial_1 g(p,q)
\end{equation}
because the differential of $\varphi$ at $(p,q)$ coincides with the restriction to $T_{(p,q)}M$
of the differential of $\t\varphi$ at the same point. By \eqref{fsgnsch} and the fact that 
$(p,q)$ is a nondegenerate zero of $\F$, it follows that the map in \eqref{diftil} is 
invertible. Therefore, by \eqref{ididx} and \eqref{idx_loc}, we have 
\begin{equation}
\begin{split}\label{idxpsi}
\idx\big(\varphi, & (p,q)\big)=\\
  &=\sign\det \Big(\partial_1\t\varphi_1(p,q)-\partial_2\t\varphi_1(p,q)
                     \big(\partial_2 g(p,q)\big)^{-1}\partial_1 g(p,q)\Big).
\end{split}
\end{equation}
Formula \eqref{flocale} follows from \eqref{fsgnsch} and \eqref{idxpsi}.

To complete the proof, let $(p_1,q_1),\ldots,(p_n,q_n)$ be the zeros of $\F$. Since $\s$ is constant 
on the connected set $U$, from \eqref{sommaindici}, Lemma \ref{zerisolati} and \eqref{sommasegni} 
we have
\begin{align*}
\deg(\varphi, M) & = \sum_{i=1}^n\idx\big(\varphi,(p_i,q_i)\big)=\\ 
&\qquad = \sum_{i=1}^n\mathfrak{s}\sign\det d\F_{(p_i,q_i)} =\mathfrak{s} \deg(\F,U),
\end{align*}
that proves the assertion.
\end{proof}

\begin{proof}[Proof of Theorem \ref{formuladeg}.]
The assertion that $\F$ is admissible in $U$ if and only if so is $\varphi$ in $M$ follows from 
the identity
\[
 \big\{(p,q)\in M:\varphi(p,q)=0\big\} = \big\{(p,q)\in U:\F(p,q)=0\big\},
\]
which can be deduced from the definition of $\F$ and the fact that, according to \eqref{phi2}, 
the projection $\varphi_2(x,y)$ of $\varphi(x,y)=\big(\varphi_1(x,y),\varphi_2(x,y)\big)$, at 
$(x,y)\in M$ onto $\R^s$, is given by
\[
 -\big(\partial_2 g(x,y)\big)^{-1}\partial_1 g(x,y)\varphi_1(x,y).
\]

Assume now that $\F$ is admissible in $U$. 
Let $V$ be an open and bounded subset of $U$ with the property that the closure $\cl{V}$ of $V$ 
is contained in $U$ and that $\F^{-1}(0,0)\subseteq V$. Clearly, $\varphi^{-1}(0,0)\cap M$ is 
contained in $V$ as well and, by the excision property of the degree of a vector field, we get
\[
 \deg(\F,U)=\deg(\F,V),\qquad \deg(\varphi, M)=\deg(\varphi,V\cap M).
\]
Therefore, it is sufficient to prove that 
\begin{equation}\label{sulcomp}
\deg(\varphi,V\cap M)=\s\deg(\F,V).
\end{equation}

We shall deduce equation \eqref{sulcomp} from Lemma \ref{zerisolati} via an approximation 
procedure. 
Given $\e>0$, Sard's Lemma implies that one can find a $C^1$ map $\F^\e:U\to\R^k\times\R^s$, 
$\F^\e=(\F^\e_1,\F^\e_2)$, that has $(0,0)$ as a regular value and such that 
\[
  \max_{(x,y)\in\Fr(V)}\big|\F^\e(x,y)-\F(x,y)\big|<\e.
\]
Define $\t\psi^\e:U\to\R^k\times\R^s$ by 
\[
\t\psi^\e(x,y)=
  \left( \F^\e_1(x,y),-\big(\partial_2 g(x,y)\big)^{-1}\partial_1 g(x,y)\F^\e_1(x,y)\right),
\]
and denote by $\psi^\e$ the restriction of $\t\psi^\e$ to $M$. As in Section 
\ref{sectan}, we see immediately that $\psi^\e$ is a tangent vector
field on $M$.
Recalling formula \eqref{phi2}, one has
\begin{align*}
\sup_{(x,y)\in \Fr (V\cap M)} \big|& \psi^\e(x,y)  -\varphi(x,y)\big| 
  \leq\sup_{(x,y)\in \Fr (V\cap M)}\Big|\F^\e_1(x,y)-\F_1(x,y)\Big| +\\  &\qquad +
    \sup_{(x,y)\in \Fr (V\cap M)}\left|\big(\partial_2 g(x,y)\big)^{-1}\partial_1 g(x,y)
                                  \big(\F^\e_1(x,y)-\F_1(x,y)\big)\right|\\
  & <\e\left(1+\sup_{(x,y)\in\Fr (V\cap M)} 
                      \left\|\big(\partial_2 g(x,y)\big)^{-1}\partial_1 g(x,y)\right\|\right)
\end{align*}
where $|\cdot|$ denotes, according to the space where applied, the Euclidean norm in $\R^k$, 
$\R^s$ or $\R^{k+s}$, and $\|\cdot\|$ denotes the norm of linear operators 
from $\R^k$ to $\R^s$. Thus, by the continuity of the partial derivatives of $g$ and the 
compactness of $\cl V\cap M$, it follows that one can choose $\e$ so small that
\[
 \max_{(x,y)\in\Fr(V)}\big|\F^\e(x,y)-\F(x,y)\big|<\min\{|\F(x,y)|:(x,y)\in\Fr(V)\},
\]
and
\[
 \max_{(x,y)\in\Fr(V\cap M)}\big|\psi^\e(x,y)-\varphi(x,y)\big|
           <\min\{|\varphi(x,y)|:(x,y)\in\Fr(V\cap M)\}.
\]
For such a choice of $\e$ it is easily checked that $\F^\e$ and $\psi^\e$ are admissibly homotopic 
to $\F$ on $V$ and to $\varphi$ on $V\cap M$, respectively (compare Remark \ref{sper}). Thus, 
\begin{equation}\label{appF}
\deg(\F^\e,V)=\deg(\F,V).
\end{equation} 
and 
\begin{equation}\label{apppsi}
 \deg(\psi^\e,V\cap M)=\deg(\varphi,V\cap M)
\end{equation} 

Observe also that because of the assumptions on $g$, any zero of $\F^\e$ is nondegenerate. 
By Lemma \ref{zerisolati} it follows that
\begin{equation}\label{fapp}
 \deg(\psi^\e,V\cap M)=\s\deg(\F^\e,V).
\end{equation}
Now, Equations \eqref{apppsi}, \eqref{fapp} and \eqref{appF} imply \eqref{sulcomp}. This 
completes the proof.
\end{proof}

\begin{example}
 Let $k=s=1$, $U=\R^2$ and $g(x,y)=x^3-y^3-3y$. Consider the tangent vector field on 
$M=g^{-1}(0)$  given by $\varphi(x,y)=\big(x(y^2+1),x^3\big)$. Define $\F:U\to\R^2$
by $\F(x,y)=\big(x(y^2+1),x^3-y^3-3y\big)$. From the above theorem one gets immediately 
that $\deg(\varphi,M)=-1\cdot\deg(\F,U)=+1$. 
\end{example}

\begin{example}
Let $s=1$, $k=2$, $U=\R^3$ and $g(x_1,x_2,y)=x_1^2-y$, $\varphi(x_1,x_2,y)=(x_1,1+x_2^3,2x_1^2)$. 
Put $\varphi_1(x_1,x_2,y)=(x_1,1+x_2^3)$ and $\varphi_2(x_1,x_2,y)=2x_1^2$. Define 
$\F(x_1,x_2,y)=\big(\varphi_1(x_1,x_2,y),g(x_1,x_2,y)\big)=(x_1,1+x_2^3,x_1^2-y)$. The unique 
zero of $\F$ is $(0,-1,0)$. From the above theorem one gets that 
$\deg(f,M)=-1\cdot\deg(\F,U)=+1$. 
\end{example}

Theorem \ref{formuladeg} and the Additivity Property can be combined to get a formula
for the degree of a tangent vector field tangent valid in a slightly more general 
situation.

\begin{corollary}\label{corg}
Let $U\subseteq\R^k\times\R^s$ be open, $g:U\to\R^s$ a smooth function having $0\in\R^s$ 
as a regular value and let $M=g^{-1}(0)$. Assume $\varphi:M\to\R^k\times\R^s$ is tangent
to $M$ and suppose that there are pairwise disjoint open and connected subsets 
$U_1,\ldots,U_N$ of $U$ such that
\begin{enumerate}
 \item $\varphi^{-1}(0)$ is compact and contained in $\bigcup_{i=1}^NU_i$;
 \item $\partial_2g(x,y)$ is nonsingular for all $(x,y)\in U_i$, $i=1,\ldots,N$.
\end{enumerate}
\begin{equation}\label{fcorg}
  \deg(\varphi,M)=\sum_{i=1}^N \s_i\deg(\F,U_i)
\end{equation}
where $\F:U\to\R^k\times\R^s$ is defined as in Theorem \ref{formuladeg} and $\s_i$ denotes 
the constant sign of $\det\partial_2g(x,y)$ in $U_i$, for $i=1,\ldots,N$.
\end{corollary}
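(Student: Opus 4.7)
The plan is to obtain \eqref{fcorg} by reducing it to Theorem \ref{formuladeg} applied separately on each connected piece $U_i$, gluing the results together via the Additivity Property. The point is that the hypotheses of Theorem \ref{formuladeg} are exactly what is required on each $U_i$: openness, connectedness, and nonsingularity of $\partial_2 g$; condition (2) of the Corollary provides precisely this.

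First, I would check that the pieces $M_i := M\cap U_i$ are each a smooth $k$-dimensional submanifold of $\R^m$, because on $U_i$ the Jacobian $\partial_2 g$ is nonsingular (so $0$ is a regular value of $g|_{U_i}$). The restriction $\varphi|_{M_i}$ is tangent to $M_i$, and $\t\varphi|_{U_i}$ is a continuous extension of it. Accordingly, the map $\F$ restricted to $U_i$ coincides with the map produced by the recipe of Theorem \ref{formuladeg} applied to the data $(U_i, g|_{U_i}, \varphi|_{M_i})$.

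Next, I would verify admissibility on each piece. By (1) the set $K:=\varphi^{-1}(0)$ is compact and contained in $\bigsqcup_{i=1}^N U_i$. Since the $U_i$ are pairwise disjoint and open, each $K\cap U_i$ is both open and closed in $K$, hence compact. Thus $\varphi|_{M_i}$ is admissible on $M_i$, and by (the admissibility statement of) Theorem \ref{formuladeg} so is $\F$ on $U_i$.

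Finally, the sets $M_1,\ldots,M_N$ are pairwise disjoint open subsets of $M$ whose union contains $\varphi^{-1}(0)$, so the Additivity Property gives
\[
  \deg(\varphi,M) = \sum_{i=1}^N \deg(\varphi, M_i).
\]
Applying Theorem \ref{formuladeg} on each $U_i$ yields $\deg(\varphi, M_i) = \s_i\deg(\F,U_i)$, and summing produces \eqref{fcorg}. The only real subtlety, and the step I would write out most carefully, is the disjointness-compactness argument that justifies applying additivity and, simultaneously, guarantees that each individual degree on $U_i$ is defined; everything else is a direct invocation of the theorem.
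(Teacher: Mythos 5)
Your argument is correct and is exactly the route the paper intends: the paper states Corollary \ref{corg} without a written proof, merely noting that it follows by combining Theorem \ref{formuladeg} with the Additivity Property, which is precisely what you carry out. Your explicit verification that each $K\cap U_i$ is compact (via the disjointness of the $U_i$ and the inclusion $K\subseteq\bigcup_i U_i$) correctly supplies the admissibility needed both for additivity on $M$ and for applying Theorem \ref{formuladeg} on each piece.
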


\section{Applications and examples}

This section is devoted to the study of the set of $T$-periodic
solutions of equations \eqref{unoedue}.

Let us introduce some notation.
We shall denote by $C_T(M)$ the set of the continuous $T$-periodic
maps from $\R$ to $M$ with the metric induced by the Banach space
$C_T(\R^m)$ of the continuous $T$-periodic $\R^m$-valued maps
(with the standard supremum norm).
For the sake of simplicity  we make some conventions. We will regard every space 
as its image in the following diagram of natural inclusions 
\[
\begin{array}{cccc}
\left[ 0,\infty \right) \times M & \longrightarrow & \left[ 0,\infty \right)
\times C_T(M) &  \\ 
\uparrow &  & \uparrow &  \\ 
M & \longrightarrow & C_T(M) & 
\end{array}
\]
In particular, we will identify $M$ with its image in $C_T(M)$ under the
embedding which associates to any $\zeta\in M$ the map $\hat \zeta\in C_T(M)$
constantly equal to $\zeta$. Moreover we will regard $M$ as the slice 
$\left\{0\right\} \times M\subset \left[ 0,\infty \right) \times M$ and,
analogously, $C_T(M)$ as $\left\{ 0\right\} \times C_T(M)$. We point out
that the images of the above inclusions are closed.

According to these identifications, if $\Omega$ is an open subset of 
$[0,\infty)\times C_T(M)$, by $\Omega\cap M$ we mean the open subset of $M$ given by all 
$\zeta\in M$ such that the pair $(0,\hat \zeta)$ belongs to $\Omega $. If $\mathcal{O}$ 
is an open subset of $[0,\infty) \times M$, then $\mathcal{O}\cap M$ represents the open 
set $\big\{\zeta\in M:(0,\zeta)\in\mathcal{O}\big\}$.

We say that $(\mu;x,y)\in [0,\infty)\times C_T(M)$ is a 
\emph{solution pair of} \eqref{uno} if $\xi=(x,y)$ satisfies \eqref{uno} for 
$\lambda=\mu$; here the pair $(x,y)$ is thought of as a single element of $C_T(M)$.  
Given $\zeta=(p,q)\in M$, a solution pair 
of the form $(0;\hat p,\hat q)$ is called \emph{trivial}.

\medskip 

Throughout this section $U$ will be an open and connected subset of $\R^k\times\R^s$. We
will always assume that $g:U\to\R^s$ is a smooth function such that $\partial_2g(x,y)$ is 
nonsingular for any $(x,y)\in U$, and $M=g^{-1}(0)$. It will also be convenient, given 
a continuous tangent vector field, $f\colon M\to\R^k\times\R^s$, to denote by $\t f$ an 
arbitrary extension of $f$ to $U$ (as in Remark \ref{remext0}) and to let $\t f_1(x,y)$ be 
the projection of $\t f(x,y)$ on $\R^k$ for any $(x,y)\in U$.

\begin{theorem}\label{tuno}
Let $f:M\to\R^k\times\R^s$, and $h:\R\times M\to\R^k\times\R^s$ be continuous tangent vector 
fields, with $h$ of a given period $T>0$ in the first variable.  Define $\F:U\to\R^k\times\R^s$ 
by $\F(x,y)=\big(\t f_1(x,y),g(x,y)\big)$ for any $(x,y)\in U$. Given an open set 
$\Omega\subseteq[ 0,\infty)\times C_T(M)$, let $\Oc\subseteq\R^m$ be open with the property 
that $\Oc\cap M=\Omega\cap M$. Assume that $\deg (\F,\Oc)$ is well defined and nonzero. Then 
there exists a connected set $\Gamma$ of nontrivial solution pairs for \eqref{uno} in 
$\Omega$ whose closure in $\Omega$ meets $f^{-1}(0)\cap\Omega$ and is not compact. In particular, 
if $M$ is closed and $\Omega=[0,\infty)\times C_T(M)$, then $\Gamma$ is unbounded.
\end{theorem}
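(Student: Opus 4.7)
The approach is to reduce the problem to the abstract global continuation principle for $T$-periodic solutions of periodically perturbed autonomous tangent ODEs on manifolds (in the spirit of \cite{FPS00}, the authors' main structural reference). Theorem \ref{formuladeg} plays the role of a bridge: it converts the topological datum $\deg(\F,\Oc)\neq 0$, given in the hypothesis in terms of the ``flat'' field $\F$ on $\Oc\subseteq U$, into the non-vanishing of the degree of the tangent field $f$ itself on the open subset $\Omega\cap M$ of the manifold, which is precisely the input required by the continuation principle.

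Concretely, I would first use the identity $\Oc\cap M=\Omega\cap M$ together with the assumed admissibility of $(\F,\Oc)$ to invoke Theorem \ref{formuladeg} (combined, if $\Oc$ fails to be connected, with Additivity/Excision on its connected components, exactly as in Corollary \ref{corg}). This yields admissibility of $(f,\Omega\cap M)$ together with
\begin{equation*}
\deg(f,\Omega\cap M) \;=\; \s\,\deg(\F,\Oc) \;\neq\; 0,
\end{equation*}
where $\s$ is the constant sign of $\det\partial_2 g$ on the connected set $U$. Next I would appeal to the known global continuation theorem for periodically forced autonomous tangent vector fields on a differentiable manifold, established in \cite{FPS00} and already exploited in \cite{Spa,Ca10}: whenever $\deg(f,W)\neq 0$ on an open $W\subseteq M$, the set of nontrivial $T$-periodic solution pairs of \eqref{uno} contains a connected set $\Gamma\subseteq\Omega$ whose closure in $\Omega$ meets the trivial solution pairs associated with $f^{-1}(0)\cap\Omega$ and is not relatively compact in $\Omega$. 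Applied with $W=\Omega\cap M$, this delivers exactly the $\Gamma$ claimed in the statement.

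For the last assertion, with $M$ closed in $\R^m$ and $\Omega=[0,\infty)\times C_T(M)$, I would upgrade ``not relatively compact in $\Omega$'' to ``unbounded'' by a standard Ascoli--Arzel\`a argument. If $\Gamma$ were bounded in $[0,\infty)\times C_T(\R^m)$, then $\lambda$ and $\sup_t|\xi(t)|$ would be bounded along $\Gamma$; hence $\dot\xi=f(\xi)+\lambda h(t,\xi)$ would be uniformly bounded on the relevant compacts, giving equicontinuity of the $\xi$-component. Ascoli--Arzel\`a would then force relative compactness in $C_T(\R^m)$, and closedness of $M$ in $\R^m$ would confine the limits back to $C_T(M)$, contradicting the non-compactness of $\cl{\Gamma}$ in $\Omega$.

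The main obstacle I anticipate is not the degree translation, which is essentially automatic from Theorem \ref{formuladeg}, but rather the precise invocation of the continuation principle of \cite{FPS00}: one must check that the underlying fixed-point index machinery on the infinite-dimensional space $C_T(M)$ is correctly localized to the arbitrary open set $\Omega$, that ``nontrivial'' is interpreted so as to exclude the whole $\lambda=0$ slice rather than only a neighbourhood of $\fix(f)$, and that the alternative provided is phrased exactly in the dichotomy ``$\cl{\Gamma}\cap\Omega$ meets $f^{-1}(0)\cap\Omega$ and is non-compact in $\Omega$''. Once this is ensured, the rest of the proof is a straightforward chain of implications.
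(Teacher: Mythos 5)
Your proposal follows essentially the same route as the paper: Theorem \ref{formuladeg} is used to convert the hypothesis $\deg(\F,\Oc)\neq 0$ into $\deg(f,\Omega\cap M)\neq 0$ (using $\Oc\cap M=\Omega\cap M$), and the conclusion is then read off from the known global continuation principle for periodically perturbed autonomous equations on manifolds. The only discrepancies are minor: the paper invokes Theorem 3.3 of \cite{FS98} rather than \cite{FPS00} for that principle, and it leaves the upgrade from non-compactness to unboundedness (your Ascoli--Arzel\`a step) to the cited theorem.
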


\begin{proof}
By Theorem \ref{formuladeg} we have
\[
|\deg(f,\Omega\cap M)|=|\deg(f,\Oc\cap M)|=|\deg(\F,\Oc)|.
\]
Thus, 
$\deg(f,\Omega\cap M)\neq 0$ and the assertion follows from Theorem 3.3 of \cite{FS98}.
\end{proof}

\begin{example}\label{elicafinta}
Let $s=2$, $k=1$, $U=\R^3$ and consider $g:\R\times\R^2\to\R^2$ given by
\[
g(x,y)=g(x;y_1,y_2)=\big(e^{y_1}\cos y_2-x, e^{y_1}\sin y_2+x-1\big).
\]
where $y=(y_1,y_2)$. Clearly, although for each $(x,y)\in\R\times\R^2$
\[
\det\partial_2g(x,y)=\det
\begin{pmatrix}
e^{y_1}\cos y_2 & e^{y_1}\sin y_2 \\
-e^{y_1}\sin y_2 & e^{y_1}\cos y_2
\end{pmatrix}
=e^{2y_1}>0,
\] 
$M=g^{-1}(0)$ is not the graph of a map $x\mapsto y(x)$.
Consider the following ODE on $M$:
\[
\dot \xi=f(\xi),
\]
where $\xi=(x,y_1,y_2)$ and $f$ is the tangent vector field given by
\[
f(x,y_1,y_2)=\big(y_2,
                  y_2(\cos y_2+\sin y_2)e^{-y_1},
                  -y_2 (\cos y_2-\sin y_2)e^{-y_1}\big).
\]
Define $\F(x,y_1,y_2)=\big(y_2,e^{y_1}\cos y_2 -x, e^{y_1}\sin y_2+x-1\big)$, for 
$(x,y_1,y_2)\in\R^3$. From Theorem \ref{formuladeg} we get $\deg(f,M)=\deg(\F,\R^3)=-1$. 

Clearly  $f^{-1}(0)=\{(1,0,0)\}$. Thus, letting $\Omega=[ 0,\infty)\times C_T(M)$ in Theorem 
\ref{tuno}, one has that given any $T$-periodic vector field $h:\R\times\R^3\to\R^3$ tangent to 
$M$ there exists an unbounded connected set $\Gamma $ of nontrivial solution pairs of equation
\[
\dot \xi=f(\xi)+\lambda h(t,\xi),\qquad\lambda\geq 0,
\]
whose closure in $[ 0,\infty)\times C_T(M)$ meets $\{(0,\hat \zeta)\}$ where $\hat \zeta\in C_T(M)$ 
is the function constantly equal to $(1,0,0)$. 
\end{example}

\bigskip
Let us now consider Equation \eqref{due}.
Let $g$ and $h$ be as above, and suppose that $h$ is 
$T $-periodic in the first variable for a given $T>0$. 
We want to derive a continuation result for \eqref{due}, analogous to
Theorem \ref{tuno} above, following \cite{Ca10}.
We say that $(\mu;x,y)\in [0,\infty)\times C_T(M)$ is a 
\emph{solution pair} of \eqref{due} if $\xi=(x,y)$ satisfies \eqref{due} for 
$\lambda=\mu$.  
Given $\zeta=(p,q)\in M$, a solution pair 
of the form $(0;\hat p,\hat q)$ is called \emph{trivial}. 

Define the `average wind' vector field $w^h$ on $M$ by 
\[
 w^h (\xi)=\frac{1}{T} \int_0^T h(t,\xi)dt.
\]
The following result concerns Equation \eqref{due}. 

\begin{theorem}\label{tdue}
Let $h:\R\times M\to\R^k\times\R^s$ be a continuous tangent vector field, of a given period $T>0$ 
in the first variable. Define $\Phi:U\to\R^k\times\R^s$ by $\Phi(x,y)=\big(\t w^h_1(x,y),g(x,y)\big)$ 
for any $(x,y)\in U$. Given an open set $\Omega\subseteq [0,\infty)\times C_T(M)$, let 
$\Oc\subset\R^m$ be an open subset with the property that $\Omega\cap M=\Oc\cap M$. Assume that 
$\deg(\Phi,\Oc)$ is well defined and nonzero. Then there exists a connected set $\Gamma$ of 
nontrivial solution pairs for \eqref{due} in $\Omega$ whose closure in $\Omega$ is not compact and meets the set 
$(w^h)^{-1}(0)\cap\Omega$. In particular, if $M$ is closed and $\Omega=[0,\infty)\times C_T(M)$, then $\Gamma $ is 
unbounded.
\end{theorem}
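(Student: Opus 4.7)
The plan is to mirror the proof of Theorem \ref{tuno}, the only difference being that $f$ is replaced by the average wind vector field $w^h$ and that the relevant continuation result comes from \cite{Ca10} rather than \cite{FS98}.

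First I would verify that $w^h$ is itself a continuous tangent vector field on $M$. This is immediate: for each fixed $\xi\in M$ the integrand $h(t,\xi)$ lies in the linear subspace $T_\xi M\subseteq\R^m$, and since $T_\xi M$ is closed under integration, so does $w^h(\xi)=\frac{1}{T}\int_0^T h(t,\xi)\,dt$. Continuity is routine from the continuity of $h$ and the compactness of $[0,T]$.

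Next, since $\t w^h$ is an extension of $w^h$ to $U$ (obtainable for instance by averaging any continuous extension of $h$), the map $\Phi(x,y)=\bigl(\t w^h_1(x,y),g(x,y)\bigr)$ is precisely the map to which Theorem \ref{formuladeg} applies with $w^h$ in place of $\varphi$. Hence
\[
|\deg(w^h,\Omega\cap M)|=|\deg(w^h,\Oc\cap M)|=|\deg(\Phi,\Oc)|\neq 0,
\]
using also that $\Omega\cap M=\Oc\cap M$ by assumption.

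The last step is to invoke the continuation principle for equations of the form \eqref{due} proved in \cite{Ca10}: if the degree of the average vector field $w^h$ on $\Omega\cap M$ is nonzero, then there exists a connected set $\Gamma$ of nontrivial solution pairs of \eqref{due} in $\Omega$ whose closure in $\Omega$ meets $(w^h)^{-1}(0)\cap\Omega$ and is not compact. When $M$ is closed and $\Omega=[0,\infty)\times C_T(M)$, the non-compactness of $\overline\Gamma$ in the complete metric space $[0,\infty)\times C_T(M)$ forces $\Gamma$ to be unbounded, yielding the final assertion. The one point deserving some care is the check that $\t w^h_1$ can indeed serve as the first component of an extension of $w^h$; this is essentially automatic, but it is the only place where a potential mismatch between averaging and extending could cause trouble, and it is resolved by observing that averaging commutes with projection on $\R^k$.
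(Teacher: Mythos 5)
Your proposal is correct and follows essentially the same route as the paper: apply Theorem \ref{formuladeg} to conclude $\deg(w^h,\Omega\cap M)\neq 0$ and then invoke the abstract global continuation principle for perturbations of the zero vector field. The only discrepancy is attribution: the paper derives the final step from Theorem 2.2 of \cite{FuPe97} (the result of \cite{Ca10} concerns the DAE formulation and is itself deduced from that source), but this does not affect the substance of the argument.
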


\begin{proof}
By Theorem \ref{formuladeg} we have
\[
|\deg(w^h,\Omega\cap M)|=|\deg(w^h,\Oc\cap M)|=|\deg(\Phi,\Oc)|.
\]
Thus, 
$\deg(w^h,\Omega\cap M)\neq 0$ and the assertion follows from Theorem 2.2 of \cite{FuPe97}.
\end{proof}

\begin{example}
Let $k=s=1$ and let $U=\R^2$. Consider the map
\[
g(x,y) = y^3+y-x^2.
\]
Clearly, $\partial_2g(x,y)=3y^2+1>0$ for all $\xi=(x,y)\in\R^2$.
Consider the following ODE on $M=g^{-1}(0)$: 
\begin{equation}\label{eqex1}
 \dot \xi=\lambda h(t,\xi),\quad\lambda\geq 0
\end{equation}
where the $2\pi$-periodic tangent vector field $h$ is given by
\[
h(t,x,y)=\left(x+y+\sin t, \frac{2x(x+y+\sin t)}{3y^2+1}\right).
\]
Define
\[
\Phi(x,y)=(x+y,y^3+y-x^2).
\]
Observe that $\Phi^{-1}(0,0)=\{(0,0)\}$ and $\deg(\Phi,\R^2)=1$, so
that Theorem \ref{tdue} applies with $\Omega=[0,\infty)\times C_T(M)$
yielding the existence of an unbounded
branch of solution pairs of \eqref{eqex1}.
\end{example}

\subsection{Applications to a class of Differential-Algebraic Equations (DAEs)} 
Let us now consider applications to semi-explicit
differential-algebraic equations of the form \eqref{DAEs}. 
As above, we will consider the case when 
$U\subseteq\R^k\times\R^s$ is open and connected and $g:U\to\R^s$ is
smooth and 
such that $\partial_2g(x,y)$ is invertible for all $(x,y)\in U$.
For equations \eqref{DAEs} we will write explicitly the tangent vector
fields $f$ and $h$ that carry out the equivalence of
of \eqref{tipo1} with \eqref{uno} and of \eqref{tipo2} with
\eqref{due}, respectively.
The argument is parallel to that of Section \ref{sectan}.

Let us consider equations on an open connected set $U\subseteq\R^k\times\R^s$ of the 
following form:
\begin{equation}\label{dae1}
 \left\{
\begin{array}{l}
 \dot x=F(t,x,y),\\
 g(x,y)=0.
\end{array}
\right.
\end{equation} 
where $F:\R\times U\to\R^k$ is continuous and $g:U\to\R^s$ is smooth and such that 
$\partial_2g(x,y)$ is invertible for all $(x,y)\in U$. 
It is well known (compare \cite[\S 4.5]{KM}) and easy to see that in this situation, 
equation \eqref{dae1} induces a tangent vector field $\Psi$ on $M$, that is, it gives
rise to an ordinary differential equation on $M=g^{-1}(0)\subseteq\R^k\times\R^s$.
In fact, one can see that setting
\[
\Psi(t;x,y) = 
    \left(F(t,x,y)\,,\,-(\partial_2g(x,y))^{-1}\partial_1g(x,y)F(t,x,y)\right),
\]
equation \eqref{dae1} is equivalent to the ordinary differential equation
\[
\dot\xi=\Psi(t,\xi)
\]
on $M$, where $\xi=(x,y)$.

Given continuous maps $\gamma:U\to\R^k$ and $\sigma:\R\times
U\to\R^k$, define
the tangent vector fields 
$f\colon M\to\R^k\times\R^s$ and 
$h\colon M\to\R^k\times\R^s$ on $M$ by 
\begin{gather}
f(x,y)=\big(\gamma(x,y),-(\partial_2g(x,y))^{-1}\partial_1g(x,y)\gamma(x,y)\big),\label{feq}\\
\intertext{and} 
h(t,x,y)=\big(\sigma(t,x,y),-(\partial_2g(x,y))^{-1}\partial_1g(x,y)\sigma(t,x,y)\big).
\label{heq}
\end{gather}
Recalling formula \eqref{phi2}, the above argument shows that
\eqref{tipo1} and \eqref{tipo2} are equivalent to \eqref{uno} 
and \eqref{due}, respectively.
Also, if $\sigma$ is $T$-periodic in the first variable, so is $h$.
\medskip

We are going to use this equivalence to deduce some of the results of \cite{Spa} 
and of \cite{Ca10} for equations of the form \eqref{tipo1} and \eqref{tipo2},
respectively. Let us begin with equations of the form \eqref{tipo1}.

We need to introduce some further notation.
We say that $(\mu;x,y)\in [0,\infty)\times C_T(U)$ is a 
\emph{solution pair of} \eqref{tipo1} if $(x,y)$ satisfies \eqref{tipo1} for 
$\lambda=\mu$.
It is convenient, given any $(p,q)\in\R^k\times\R^s$, to denote by $(\hat p,\hat q)$ 
the map in $C_T(\R^k\times\R^s)$ that is constantly equal to $(p,q)$. A solution pair 
of the form $(0;\hat p,\hat q)$ is called \emph{trivial}. 

Let $\F:U\to\R^k\times\R^s$ be given by $\F(x,y)=\big(\gamma(x,y),g(x,y)\big)$. As 
one immediately checks, $(\hat p,\hat q)$ is a constant solution of \eqref{tipo1} 
corresponding to $\lambda=0$ if and only if $\F(p,q)=(0,0)$. Thus, with this 
notation, the set of trivial solution pairs of \eqref{tipo1} can be written as 
\[
\{(0;\hat p,\hat q)\in [0,\infty)\times C_T(U): \F(p,q)=(0,0)\}.
\] 

Given $\Omega\subseteq[0,\infty)\times C_T(U)$, with $U\cap\Omega$ we denote the set of 
points of $U$ that, regarded as constant functions, lie in $\Omega$. Namely,
\[
  U\cap\Omega=\{(p,q)\in U: (0;\hat p,\hat q)\in\Omega\}.
\]

We are now ready to state and prove a result concerning the $T$-periodic solutions
of  \eqref{tipo1}. 

\begin{theorem}[\cite{Spa}]\label{rami1}
Let $U\subseteq\R^k\times\R^s$ be open and connected. Let $g:U\to\R^s$, $\gamma:U\to\R^k$,
$\sigma:\R\times U\to\R^k$ and $T>0$ be such that $\gamma$ and $\sigma$ are continuous, 
$\sigma$ being $T$-periodic in the first variable, and $g$ is smooth with 
$\partial_2g(x,y)$ invertible for all $(x,y)\in U$. Let also 
$\F(x,y)=\big(\gamma(x,y),g(x,y)\big)$. Given $\Omega\subseteq[0,\infty)\times C_T(U)$ 
open, assume $\deg(\F,U\cap\Omega)$ is well-defined and nonzero. Then, there exists a 
connected set $\Gamma$ of nontrivial solution pairs of \eqref{tipo1} whose closure in 
$\Omega$ is not compact and meets the set $\{(0,\hat p,\hat q)\in\Omega:\F(p,q)=(0,0)\}$.
\end{theorem}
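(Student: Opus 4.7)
The plan is to reduce the DAE \eqref{tipo1} to the ODE \eqref{uno} on the manifold $M = g^{-1}(0)$ and then invoke Theorem \ref{tuno}. The required equivalence is already carried out by the tangent vector fields $f$ and $h$ defined in \eqref{feq}--\eqref{heq}, so the essential task is to match the open sets and degree data appearing in the two statements.

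First I would observe that, since $\partial_2 g(x,y)$ is invertible throughout $U$, the formula \eqref{feq} itself defines a continuous extension $\t f : U \to \R^k \times \R^s$ of $f$, whose projection onto $\R^k$ is exactly $\gamma$. Hence the map $\F(x,y) = \big(\gamma(x,y), g(x,y)\big)$ of the statement coincides with the map $(\t f_1, g)$ that appears in Theorem \ref{tuno}. Next, set $\Omega' := \Omega \cap \big([0,\infty) \times C_T(M)\big)$, which is open in $[0,\infty) \times C_T(M)$. Any solution of \eqref{tipo1} identically satisfies $g(x(t),y(t)) = 0$, so every solution pair of \eqref{tipo1} in $\Omega$ automatically lies in $\Omega'$ and, by the equivalence recalled above, corresponds to a solution pair of \eqref{uno}. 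Moreover the trivial solution pairs match, because $f^{-1}(0) = \{(p,q) \in M : \gamma(p,q) = 0\}$ coincides with $\{(p,q) \in U : \F(p,q) = (0,0)\}$.

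I would then take $\Oc := U \cap \Omega$ as the open subset of $\R^m$ required by Theorem \ref{tuno}; a direct unwinding of definitions yields $\Oc \cap M = \Omega' \cap M$. Since $\deg(\F, \Oc)$ is nonzero by hypothesis, Theorem \ref{tuno} (applied to \eqref{uno}, with $\Omega'$ in place of its $\Omega$) produces a connected set $\Gamma$ of nontrivial solution pairs of \eqref{uno} contained in $\Omega'$, whose closure in $\Omega'$ is noncompact and meets $f^{-1}(0) \cap \Omega'$.

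Finally I would transfer these conclusions back to the larger ambient space $[0,\infty) \times C_T(U)$. The key observation is that $M = g^{-1}(0)$ is closed in $U$, so $C_T(M)$ is closed in $C_T(U)$, and consequently $\Omega'$ is closed in $\Omega$. Hence the closure of $\Gamma$ in $\Omega$ coincides with its closure in $\Omega'$ — which is noncompact — and, under the identifications above, meets the set $\{(0,\hat p,\hat q) \in \Omega : \F(p,q) = (0,0)\}$. The only delicate point is precisely this last bookkeeping: making sure that connectedness, noncompactness and the intersection condition all survive the change of ambient space from $C_T(M)$ to $C_T(U)$. No analytic estimates are needed beyond those already packaged inside Theorem \ref{tuno}.
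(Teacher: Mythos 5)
Your proposal is correct and follows essentially the same route as the paper: the paper's proof simply invokes the equivalence of \eqref{tipo1} with \eqref{uno} via the vector fields \eqref{feq}--\eqref{heq} and then applies Theorem \ref{tuno}. You merely spell out the bookkeeping (the choice $\Oc=U\cap\Omega$, the identification $\Oc\cap M=\Omega'\cap M$, and the closedness of $C_T(M)$ in $C_T(U)$) that the paper leaves implicit.
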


\begin{proof}
Let $f\colon M\to\R^k\times\R^s$ and $h\colon M\to\R^k\times\R^s$ be given by \eqref{feq}
and \eqref{heq}, respectively. Then, as remarked above \eqref{tipo1} is equivalent to 
\eqref{uno} on $M=g^{-1}(0)$. 
This equivalence implies that each pair $(\lambda;x,y)$ can be thought as a solution
pair of \eqref{uno} and vice versa. The assertion follows from Theorem \ref{tuno}.
\end{proof}

The following example could be treated with classical methods because
of the asymptotic behavior at infinity of the implicit function.
Nevertheless, we choose to include it here as an illustration of our results.

\begin{example}
Consider the second order DAE
\begin{equation}\label{molladura}
\left\{
\begin{array}{l}
 \ddot x=-y-\alpha\dot x+\lambda \sigma(t,x,\dot x), \quad \lambda\geq 0\\
 y^3+y-x^5-x=0
\end{array}
\right.
\end{equation}
that represents the motion with friction $-\alpha\dot x$, $\alpha>0$, of a unit mass 
particle constrained to the real axis and attached to the origin with an initially `stiff' 
nonlinear spring (such that the displacement $x$ and the reaction force $-y$ are related 
implicitly by $y^3+y=x^5+x$), and acted on by a $T$-periodic force $\sigma$ depending on 
position and velocity. Let us rewrite equivalently \eqref{molladura} as a first order DAE 
of the form \eqref{tipo1}.
\begin{equation}\label{molla1}
\left\{
\begin{array}{l}
 \dot x_1=x_2\\
 \dot x_2=-y-\alpha x_2+\lambda \sigma(t;x_1, x_2), \quad \lambda\geq 0\\
 y^3+y-x_1^5-x_1=0.
\end{array}
\right.
\end{equation}
Take $U=\R^2\times\R$ and define $\F(x_1,x_2,y)=(x_2,-\alpha x_2-y,y^3+y-x_1^5-x_1)$. 
Since $\deg(\F,U)=1$, Theorem \ref{rami1} yields an unbounded connected set $\Gamma$ of 
nontrivial solution pairs of \eqref{molla1} emanating from the solution constantly equal 
to $(0,0,0)$. 
Clearly, each element of $\Gamma$ corresponds to a nonconstant $T$-periodic solution of 
\eqref{molladura}. In fact, an energy argument shows that \eqref{molladura} has only 
constant periodic solutions for $\lambda=0$. Thus, $\Gamma$ has no intersection with the 
slice $\{0\}\times C_T(U)$. 
\end{example}

In a similar way we deduce a continuation result for equation \eqref{tipo2} from Theorem 
\ref{tdue} above. In the following we will say that $(\mu;x,y)\in [0,\infty)\times C_T(U)$ 
is a \emph{solution pair of} \eqref{tipo2} if $(x,y)$ satisfies \eqref{tipo2} for 
$\lambda=\mu$.
A solution pair 
of the form $(0;\hat p,\hat q)$ will be called \emph{trivial}.

\begin{theorem}[\cite{Ca10}]\label{rami2}
Let $U\subseteq\R^k\times\R^s$ be open and connected. Let $g:U\to\R^s$
be smooth with 
$\partial_2g(x,y)$ invertible for all $(x,y)\in U$, and $\sigma:\R\times U\to\R^k$ continuous 
and $T$-periodic in the first variable. Let also $\Phi:U\to\R^k\times\R^s$ be given by 
$\Phi(x,y)=\big(\Sigma(x,y),g(x,y)\big)$, where
\[
 \Sigma(x,y)=\frac1T \int_0^T \sigma(t,x,y)dt.
\]
Given $\Omega\subseteq[0,\infty)\times C_T(U)$ 
open, assume that $\deg(\Phi,U\cap\Omega)$ is well-defined and nonzero. Then, there exists a 
connected set $\Gamma$ of nontrivial solution pairs of \eqref{tipo2} whose closure in 
$\Omega$ is not compact and meets the set $\{(0,\hat p,\hat q)\in\Omega:\Phi(p,q)=(0,0)\}$.
\end{theorem}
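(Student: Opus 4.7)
The plan is to reduce the statement to Theorem \ref{tdue} by means of the equivalence between \eqref{tipo2} and \eqref{due} on the manifold $M=g^{-1}(0)$, exactly in the spirit of the proof of Theorem \ref{rami1}.

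First I would define the tangent vector field $h\colon\R\times M\to\R^k\times\R^s$ by formula \eqref{heq}, i.e.
\[
h(t,x,y)=\bigl(\sigma(t,x,y),-(\partial_2 g(x,y))^{-1}\partial_1 g(x,y)\sigma(t,x,y)\bigr).
\]
The preliminary discussion leading to \eqref{heq} shows that $h$ is indeed tangent to $M$ and that $h$ inherits the $T$-periodicity in $t$ from $\sigma$. Moreover, as already observed in the paper, equation \eqref{tipo2} is equivalent to equation \eqref{due} on $M$, in the sense that $(\mu;x,y)\in[0,\infty)\times C_T(U)$ is a solution pair of \eqref{tipo2} if and only if $(x,y)$ takes values in $M$ and $(\mu;x,y)$ is a solution pair of \eqref{due}. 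In particular, trivial solution pairs correspond to trivial solution pairs.

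Next, I would compute the average wind vector field of $h$: since integration commutes with the linear operator $(x,y)\mapsto-(\partial_2 g(x,y))^{-1}\partial_1 g(x,y)$ (which does not depend on $t$),
\[
w^h(x,y)=\bigl(\Sigma(x,y),-(\partial_2 g(x,y))^{-1}\partial_1 g(x,y)\Sigma(x,y)\bigr).
\]
Hence the projection of $w^h$ onto the $\R^k$ factor is precisely $\Sigma$, and an admissible continuous extension of $w^h$ to a neighborhood of $M$ in $U$ can be taken so that its first component is $\Sigma$ itself (which is already defined on all of $U$). Thus the auxiliary map introduced in Theorem \ref{tdue} coincides with the map $\Phi(x,y)=\bigl(\Sigma(x,y),g(x,y)\bigr)$ of the present statement.

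Finally, I would choose $\Oc\subseteq\R^{k+s}$ to be the open set $U\cap\Omega$, viewed as a subset of $\R^{k+s}$ via the identification of constant maps with points. With this choice one has $\Oc\cap M=\Omega\cap M$, and by hypothesis $\deg(\Phi,\Oc)=\deg(\Phi,U\cap\Omega)$ is well-defined and nonzero. Theorem \ref{tdue} then yields a connected set $\Gamma$ of nontrivial solution pairs of \eqref{due} in $\Omega$ whose closure in $\Omega$ is not compact and meets $(w^h)^{-1}(0)\cap\Omega$. Transporting back through the equivalence between \eqref{tipo2} and \eqref{due}, $\Gamma$ becomes a connected set of nontrivial solution pairs of \eqref{tipo2} with the required properties; the trivial solution pairs that $\overline\Gamma$ meets correspond to zeros of $w^h$ on $M$, which by Theorem \ref{formuladeg} (applied on connected components of $U\cap\Omega$ if needed, as in Corollary \ref{corg}) are exactly the points $(p,q)\in\Omega\cap M$ with $\Phi(p,q)=(0,0)$. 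The only mildly delicate point is bookkeeping of the three identifications (DAE solutions versus ODE solutions on $M$, constant functions versus points, and $\Omega\cap M$ versus $\Oc\cap M$); once these are lined up the conclusion follows at once.
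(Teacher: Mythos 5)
Your proposal is correct and follows essentially the same route as the paper, which simply defines $h$ by \eqref{heq}, invokes the equivalence of \eqref{tipo2} with \eqref{due} on $M=g^{-1}(0)$, and applies Theorem \ref{tdue}. The extra details you supply (the computation of $w^h$, the identification of its first component with $\Sigma$, and the choice $\Oc=U\cap\Omega$) are all consistent with the paper's intended argument.
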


\begin{proof}
Let $h\colon M\to\R^k\times\R^s$ be the tangent vector field on $M$ given by \eqref{heq}.
Then, equation \eqref{tipo2} is equivalent to \eqref{due} on $M=g^{-1}(0)$, and the assertion 
follows from Theorem \ref{tdue}.
\end{proof}

\begin{example}\label{elicavera}
Consider the following DAE in the form \eqref{tipo2} with $T=2\pi$:
\begin{equation}\label{elica1}
 \left\{
\begin{array}{ll}
 \begin{array}{l}
 \dot x_1=\lambda(y_2+\cos t)\\
 \dot x_2=\lambda \big(y_1-2 \cos^2 t \big)
\end{array} & \lambda \geq0\\
\begin{array}{l}
x_1-y_1\cos y_2=0\\
x_2-y_1\sin y_2=0
\end{array} & y_1>0
\end{array}
\right.
\end{equation}
Here, $s=2$, $k=2$, $U=\{(x_1,x_2;y_1,y_2)\in\R^2\times\R^2, y_1>0\}$. Let 
$g:U\to\R^2$ be given by
\[
g(x,y)=g(x_1,x_2;y_1,y_2)=\big(x_1-y_1\cos y_2, x_2-y_1\sin y_2 \big).
\]
where $x=(x_1,x_2)$ and $y=(y_1,y_2)$. One has,
\[
\det\partial_2g(x,y)=\det
\begin{pmatrix}
\cos y_2 & y_1\sin y_2\\
\sin y_2 & -y_1\cos y_2
\end{pmatrix}
=y_1>0.
\]
Clearly, the $2$-dimensional manifold $M$ cannot be written as the graph of a 
function $(x_1,x_2)\mapsto\big(y_1(x_1,x_2),y_2(x_1,x_2)\big)$.
Let $\Phi:U\to\R^4$ be given by
\[
 \Phi(x,y)=\Phi(x_1,x_2;y_1,y_2)=\big(y_2,y_1-1,x_1-y_1\cos y_2,x_2-y_1\sin y_2\big).
\]
A straightforward computation shows that $\Phi^{-1}(0)=\{(1,0,1,0)\}$ and that 
$\deg(\Phi,U)=-1$.
Then, Theorem \ref{rami2}  
applies with $\Omega=[0,\infty)\times C_T(U)$, yielding the existence of
an unbounded branch of nontrivial solution pairs of \eqref{elica1}.
\end{example}

\end{document}